\newtheorem{lemma}{Lemma}
\newtheorem{proposition}{Proposition}
\newtheorem{theorem}{Theorem}
\newcommand{\bes}{\begin{displaymath}}
\newcommand{\ees}{\end{displaymath}}
\newcommand{\be}{\begin{equation}}
\newcommand{\ee}{\end{equation}}
\newcommand{\ba}{\begin{eqnarray}}
\newcommand{\ea}{\end{eqnarray}}
\newcommand{\bas}{\begin{eqnarray*}}
\newcommand{\eas}{\end{eqnarray*}}
\newcommand{\@Bbb}[1]{\ensuremath{\Bbb #1}}
\newcommand{\B}{{\@Bbb B}}
\newcommand{\C}{{\@Bbb C}}
\newcommand{\E}{{\@Bbb E}}
\newcommand{\F}{{\@Bbb F}}
\newcommand{\G}{{\@Bbb G}}
\renewcommand{\P}{{\@Bbb P}}
\newcommand{\Q}{{\@Bbb Q}}
\newcommand{\bQ}{{\@Bbb Q}}
\newcommand{\N}{{\@Bbb N}}
\newcommand{\R}{{\@Bbb R}}
\newcommand{\T}{{\@Bbb T}}
\newcommand{\bbR}{{\@Bbb R}}
\newcommand{\W}{{\@Bbb W}}
\newcommand{\Z}{{\@Bbb Z}}
\newcommand{\bbZ}{{\@Bbb Z}}
\newcommand{\@s}[1]{\ensuremath{\mathcal #1}}
\newcommand{\cA}{\@s A}
\newcommand{\cB}{\@s B}
\newcommand{\cC}{\@s C}
\newcommand{\cD}{\@s D}
\newcommand{\cE}{\@s E}
\newcommand{\cF}{\@s F}
\newcommand{\cG}{\@s G}
\newcommand{\cH}{\@s H}
\newcommand{\cI}{\@s I}
\newcommand{\cJ}{\@s J}
\newcommand{\cK}{\@s K}
\newcommand{\cL}{\@s L}
\newcommand{\cN}{\@s N}
\newcommand{\cM}{\@s M}
\newcommand{\cO}{\@s O}
\newcommand{\cP}{\@s P}
\newcommand{\cQ}{\@s Q}
\newcommand{\cR}{\@s R}
\newcommand{\cS}{\@s S}
\newcommand{\cT}{\@s T}
\newcommand{\cU}{\@s U}
\newcommand{\cV}{\@s V}
\newcommand{\cW}{\@s W}
\newcommand{\cX}{\@s X}
\newcommand{\cY}{\@s Y}
\newcommand{\cZ}{\@s Z}
\def\d{{\rm d}}
\newcommand{\@bm}[1]{\ensuremath{\mathbf #1}}
\newcommand{\bma}{\@bm a}\newcommand{\bmA}{\@bm A}
\newcommand{\bmb}{\@bm b}\newcommand{\bmB}{\@bm B}
\newcommand{\bmc}{\@bm c}\newcommand{\bmC}{\@bm C}
\newcommand{\bmd}{\@bm d}\newcommand{\bmD}{\@bm D}
\newcommand{\bme}{\@bm e}
\newcommand{\bmf}{\@bm f}\newcommand{\bmF}{\@bm F}
\newcommand{\bmg}{\@bm g}\newcommand{\bmG}{\@bm G}
\newcommand{\bmh}{\@bm h}\newcommand{\bmH}{\@bm H}
\newcommand{\bmi}{\@bm i}\newcommand{\bmI}{\@bm I}
\newcommand{\bmj}{\@bm j}
\newcommand{\bmk}{\@bm k}\newcommand{\bmK}{\@bm K}
\newcommand{\bml}{\@bm l}
\newcommand{\bmm}{\@bm m}\newcommand{\bmM}{\@bm M}
\newcommand{\bmn}{\@bm n}
\newcommand{\bmo}{\@bm o}
\newcommand{\bmp}{\@bm p}
\newcommand{\bmq}{\@bm q}\newcommand{\bmQ}{\@bm Q}
\newcommand{\bmr}{\@bm r}
\newcommand{\bms}{\@bm s}\newcommand{\bmS}{\@bm S}
\newcommand{\bmt}{\@bm t}
\newcommand{\bmu}{\@bm u}\newcommand{\bmU}{\@bm U}
\newcommand{\bmw}{\@bm w}\newcommand{\bmW}{\@bm W}
\newcommand{\bmv}{\@bm v}\newcommand{\bmV}{\@bm V}
\newcommand{\bmx}{\@bm x}\newcommand{\bmX}{\@bm X}\newcommand{\bx}{\@bm x}
\newcommand{\bmy}{\@bm y}\newcommand{\bmY}{\@bm Y}\newcommand{\by}{\@bm y}
\newcommand{\bmz}{\@bm z}\newcommand{\bmZ}{\@bm Z}
\newcommand{\bmzero}{\@bm 0}
\newcommand{\@g}[1]{\ensuremath{\mathfrak #1}}
\newcommand{\gA}{\@g A}
\newcommand{\gD}{\@g D}
\newcommand{\gJ}{\@g J}
\newcommand{\gF}{\@g F}
\newcommand{\gM}{\@g M}
\newcommand{\gR}{\@g R}
\newcommand{\Lip}{\mathop{\mbox{Lip}}}
\newcommand{\commentout}[1]{{}}
\begin{document}

\title[An invariance principle...]{An invariance principle for the law of the iterated logarithm for some Markov chains}
\author{W. Bo\l t}
\address{W.B. Institute of Mathematics, University of Gda\'nsk, Wita Stwosza
57, 80-952 Gda\'nsk, Poland}
\email{ja@hope.art.pl}
 \author{A. A. Majewski }
\address{A.A.M. Institute of Mathematics, University of Gda\'nsk, Wita Stwosza
57, 80-952 Gda\'nsk, Poland}
 \email{aamajewski@gmail.com}

\author{T. Szarek}
\address{T.S. Institute of Mathematics, University of Gda\'nsk, Wita Stwosza
  57, 80-952 Gda\'nsk, Poland}
\email{szarek@intertele.pl}

\begin{abstract} 
Strassen's invariance principle for additive functionals of Markov chains with spectral gap in the Wasserstein metric is proved.
\end{abstract}

\subjclass[2000]{60J25, 60H15 (primary), 76N10 (secondary)}
\keywords{Ergodicity of Markov families, invariant measures, law of iterated logarithm}

\thanks{Tomasz Szarek has been supported by Polish Ministry of Science and
Higher Education  Grants N N201 419139. }
\date\today
\maketitle

\section{Introduction}
Suppose that $(E, \rho)$ is a Polish space. By $\mathcal B(E)$ we denote the family of all Borel sets in $E$. By $\mathcal M_1$ we denote the space of all probability Borel measures on $E$. Let $\pi: E\times\mathcal B(E)\to [0, 1]$ be a transition probability on $E$. The Markov operator $P$ is defined by $Pf(x)=\int_E f(y)\pi (x, \d y)$ for every bounded Borel measurable function $f$ on $E$. The same formula defines $Pf$ for any Borel measurable functions $f\ge 0$  which need not be finite. Denote by $B_b(E)$ the set of all bounded Borel measurable functions equipped with the supremum norm and let $C_b(E)$ be its subset consisting of all bounded continuous functions.

Suppose that $(X_n)_{n\ge 0}$ is an $E$--valued Markov chain, given over some probability space $(\Omega, \mathcal F, \mathbb P)$, whose transition is $\pi$ and its initial distribution is equal to $\mu_0$. Denote by $\mathbb E$ the expectation corresponding to $\mathbb P$. We shall denote by $\mu P$, the associated transfer operator describing the evolution of the law of $X_n$. To be precise, $\mu P$ is defined by the formula $\int_E f(x)\mu P(\d x)=\int_E Pf(x)\mu (\d x)$ for any $f\in B_b(E)$ and $\mu\in\mathcal M_1$. To simplify the notation we shall write $\langle f, \mu\rangle$ instead of $\int_E f(y)\mu (\d y)$.

Given a Lipschitz function $\psi: E\to\mathbb R$ we define
$$
S_n(\psi):= \psi(X_0)+\psi(X_1)+\ldots+\psi (X_n)\quad\text{for $n\ge 0$}.
$$
Our aim is to find conditions under which $S_n(\psi) $ satisfies the law of the iterated logarithm {\bf (LIL)}. This natural question is raised when central limit theorems {\bf (CLT)} are verified. Since 1986 when Kipnis and Varadhan \cite{KV} proved the central limit theorem for additive functionals of stationary reversible ergodic Markov chains, it has been a huge amount of reviving attempts to do this in various settings and under different conditions (see \cite{M, MW}). A common factor of the mentioned results was that they  were established with respect to the stationary probability law of the chain. In \cite{DL} Derriennic and Lin answered the question about the validity of the {\bf CLT} with
respect to the law of the Markov chain starting at some point $x$. Namely, they proved that the {\bf CLT} holds for almost every
$x$ with respect to the invariant initial distribution (see also \cite{DL1} and the references therein). On the other hand, Guivarc'h and Hardy \cite{GH} proved the {\bf CLT} for a class of Markov chains associated with the transfer operator having spectral gap. Recently Komorowski and Walczuk studied Markov processes with the transfer operator having spectral gap in the Wasserstein metric and proved the {\bf CLT} in the non-stationary case (see \cite{KW}).
Other interesting results under similar assumptions were obtained by  S. Kuksin and A. Shirikyan (see \cite{K, Sh1}).

The {\bf LIL} we study in this note was also considered in many papers. There are several results governing, for instance, the Harris recurrent chains \cite{Ch, Ch-1, MT}. Similarly the {\bf CLT} results they are formulated mostly for stationary ergodic chains (see for instance \cite{A, CC, LW, R, WZ}). In the case when one is able to find the solution to the Poisson equation $h=f+Ph$, the problem may be reduced to the martingale case \cite{GL} (see also \cite{MT}). But the {\bf LIL} for martingales was carefully examined in many papers (see \cite{HH, HS, St-1, St-2}) and a lot of satisfactory results were obtained.

Our note is aimed
at proving the {\bf LIL} for Markov chains that satisfy the spectral gap property in the Wasserstein metric. It is worth mentioning here that many Markov chains satisfy this property, e.g. Markov chains associated with iterated function systems or stochastic differential equations disturbed with Poisson noise (see \cite{L}). 

Our result is based upon the {\bf LIL} for martingales due to Heyde and Scott (see Theorem 1 in \cite{HS}).

\section{Assumptions and auxiliary results}

For every measure $\nu\in\mathcal M_1$ the law of the Markov chain $(X_n)_{n\ge 0}$ with transition probability $\pi$ and initial distribution $\nu$, is the probability measure $\mathbb P_{\nu}$ on $(E^{\mathbb N}, \mathcal B(E)^{\otimes\mathbb N})$ such that:
$$
\mathbb P_{\nu}[X_{n+1}\in A|X_n=x]=\pi (x, A)\quad\text{and}\quad\mathbb P_{\nu}[X_0\in A]=\nu(A),
$$
where $x\in E$, $A \in \mathcal B(E)$. The expectation with respect to $\mathbb P_{\nu}$ is denoted by $\mathbb E_{\nu}$. For $\nu=\delta_x$, the Dirac measure at $x\in E$, we write just $\mathbb P_x$ and $\mathbb E_x$.

We will make the following assumption:
\begin{itemize}
\item[{\bf (H0)}] the Markov operator satisfies the Feller property, i.e. $P (C_b(E))\subset C_b(E)$.
\end{itemize}

We shall denote by $\mathcal M_{1, 1}$ the space of all probability measures possessing finite first moment, i.e. $\nu\in\mathcal M_{1, 1}$ iff $\nu\in\mathcal M_1$ and $\int_E \rho(x_0, x)\nu(\d x)<\infty$ for some (thus all) $x_0\in E$. For abbreviation we shall write $\rho_{x_0}(x)=\rho(x_0, x)$. We assume that:

\begin{itemize}

\item[{\bf (H1)}] for any $\nu\in\mathcal M_{1, 1}$ we have $P\nu \in \mathcal M_{1, 1}$.
\end{itemize}
It may be proved that the space $\mathcal M_{1, 1}$ is a complete metric space when equipped with the Wasserstein metric
$$
d(\nu_1, \nu_2)=\sup\{|\langle f, \nu_1\rangle- \langle f, \nu_2\rangle|: f: E\to \mathbb R,\,\,\, \Lip f\le 1\}
$$
for $\nu_1, \nu_2\in\mathcal M_{1, 1}$ and the convergence in the Wasserstein metric is equivalent to the weak convergence, see e.g. \cite{V}. (Here $\Lip f$ denotes the Lipschitz constant of $f$.) The main assumption made in our note says that the Markov operator $P$ is contractive with respect to the Wasserstein metric, i.e.
\begin{itemize}
\item[{\bf (H2)}] there exist $\gamma\in (0, 1)$ and $c>0$ such that 
\begin{equation}
\label{eqh4}
d(\mu P^n, \nu P^n) \leq c  \gamma^ n d(\mu,\nu), \ \ \ \ \text{for}\,\, n\ge 1, \ \mu, \nu \in \mathcal{M}_{1, 1}.
\end{equation}
\end{itemize}

Let $\mu\in\mathcal M_{1, 1}$. From now on we shall assume that the initial distribution of $(X_n)_{n\ge 0}$ is $\mu$. Moreover,
\begin{itemize}
\item[{\bf (H3)}] there exists $x_0\in E$ and $\delta > 0$ such that 
\begin{equation}\label{H3}
\sup_{n\geq 0} \mathbb E_{\mu} \rho_{x_0}^{2+\delta}(X_n) < \infty.
\end{equation}
\end{itemize}

It is easy to prove that under the assumptions (H0)--(H3) there exists a unique invariant (ergodic) measure $\mu_*\in\mathcal M_{1}$. In particular, $\mu_*\in\mathcal M_{1, 1}$. The proof was given for Markov processes with continuous time in \cite{KW} but it still remains valid in discrete case.

Let $n_0\ge  2$ be such that 
$$
\gamma_0=c^2 \gamma^{n_0}<1.
$$

We start this part of the paper with a rather technical lemma.

\begin{lemma}

Let $g_{n, k}: E^{2(k+n)}\to\mathbb R$ for arbitrary $k, n\ge 1$, be Lipschitz continuous in each variable with the same Lipschitz constant $L$. Then there exists constant $\tilde L$ dependent only on  $L$ and such that the function
\begin{equation}\label{e1l}
\begin{aligned}
H_{n, k}(x)&=\int_{E}\pi_{{1}}(x, \d y_{1})\int_{E}\pi_{{2}}(y_{1}, \d y_{2})\cdots\int_{E}\pi_{2(k+n)-1}(y_{2(k+n)-2}, \d y_{2(k+n)-1})\\
&\times\int_{E}\pi_{2(k+n)}(y_{2(k+n)-1}, \d y_{2(k+n)})g_{n, k}(y_{1},\ldots, y_{2(k+n)}),
\end{aligned}
\end{equation}
where $\pi_l (y_{l-1}, \d y_l)=\delta_{y_{l-1}}P^{k_l}(\d y_l)$, $k_l\ge 1$ and additionally  $k_l\ge n_0-1$ for all even $l$, 
is Lipschitzean with the Lipschitz constant $\tilde L$.
\end{lemma}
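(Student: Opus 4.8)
The plan is to estimate the Lipschitz constant of $H_{n,k}$ in a fixed variable, say $x$, by fixing all the other data and tracking how a perturbation of $x$ propagates through the iterated integrals. The key observation is that the even-index kernels $\pi_l$ with $l$ even are of the form $\delta_{y_{l-1}}P^{k_l}$ with $k_l \ge n_0-1$, so that each such step contributes a contraction factor governed by $\gamma_0 = c^2\gamma^{n_0}<1$ via assumption (H2), whereas the odd-index kernels $\pi_l = \delta_{y_{l-1}}P^{k_l}$ contribute only a bounded (non-contracting) factor coming from the Feller/Lipschitz behaviour of $P$. Since there are $k+n$ even steps, and each pair of consecutive (odd, even) steps gives an overall multiplicative factor at most $c\gamma^{n_0}$ (or some fixed constant times such a factor), the cumulative effect of the $2(k+n)$ integrations on a unit Lipschitz perturbation of a single variable is summable in a geometric series whose ratio is strictly less than $1$, uniformly in $n,k$.

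First I would record the basic mechanism: if $h:E\to\mathbb R$ has $\Lip h \le \ell$, then for the "averaging over $P^{m}$" operation $x\mapsto \int_E h(y)\,\delta_x P^m(\d y) = P^m h(x)$ we have, for any $x, x'$,
$$
|P^m h(x) - P^m h(x')| \le \ell\, d(\delta_x P^m, \delta_{x'} P^m) \le \ell\, c\gamma^m\, \rho(x,x'),
$$
by the definition of the Wasserstein metric and (H2); in particular $\Lip(P^m h)\le c\gamma^m \ell$, and when $m\ge n_0-1$ this is at most $c\gamma^{n_0-1}\ell$. (One should note $c\gamma^{n_0-1}$ need not be less than $1$, but $c\gamma^{n_0-1}\cdot c\gamma \le c^2\gamma^{n_0}=\gamma_0<1$, so one must pair it with a following factor; alternatively one absorbs a single leftover $c$ into $\tilde L$ at the end.) For a general $m\ge 1$ we still get the crude bound $\Lip(P^m h)\le c\gamma\,\ell \le c\,\ell$.

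Then I would perform the estimate by "peeling off" the innermost integral. Write $H_{n,k}(x)$ as an $N$-fold iterated integral with $N=2(k+n)$. Fix all variables except $x$; the integrand $g_{n,k}$ is Lipschitz with constant $L$ in each variable. Integrating out $y_N$ produces a function of $(y_1,\dots,y_{N-1})$ that is Lipschitz with constant $L$ in each of $y_1,\dots,y_{N-2}$ and Lipschitz with constant $c\gamma^{k_N}L \le c\gamma^{n_0-1}L$ in $y_{N-1}$ (since $N$ is even, $k_N\ge n_0-1$). Now integrate out $y_{N-1}$: the dependence on $y_{N-2}$ picks up a factor $c\gamma^{k_{N-1}}\le c\gamma$ from that odd step applied to the $y_{N-1}$-Lipschitz part, plus it already had a contribution of size $L$ from the direct $y_{N-2}$-dependence of the previous function, so the new $y_{N-2}$-Lipschitz constant is at most $L + c\gamma^{k_{N-1}}\cdot c\gamma^{n_0-1}L \le L(1+\gamma_0)$. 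Continuing inward, after processing each consecutive (even, odd) pair the Lipschitz constant in the next variable is multiplied by a factor $\le \gamma_0$ and incremented by $\le L$; unwinding the recursion, the Lipschitz constant in $y_1$ after all integrations down to $y_2$ is bounded by $L\sum_{j\ge 0}\gamma_0^{\,j} = L/(1-\gamma_0)$ (up to a single harmless factor $c$ from the final odd step $\pi_1$), which gives the claimed
$$
\widetilde L = \frac{cL}{1-\gamma_0}
$$
independent of $n$ and $k$.

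The main obstacle I anticipate is purely bookkeeping: setting up a clean induction on the number of remaining integration variables so that the bound "Lipschitz constant in the current outermost variable $\le$ (something bounded by $L/(1-\gamma_0)$), and Lipschitz constant in each still-unintegrated variable $\le L$" is a true inductive hypothesis, and making sure the contraction factor $\gamma_0<1$ is always harvested from a matched (odd, even) pair of steps rather than from a single even step (which only gives $c\gamma^{n_0-1}$, possibly $\ge 1$). Care is also needed because an odd step applied to the part of the function that was already Lipschitz in its own variable must not be double-counted. Once the recursion $a_{j+1}\le L + \gamma_0 a_j$ is correctly identified, the conclusion is immediate and $\widetilde L$ depends only on $L$ (and the fixed constants $c,\gamma,n_0$, hence ultimately only on $L$ as claimed, with $\gamma_0$ treated as an absolute constant of the setting).
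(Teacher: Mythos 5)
Your proposal follows essentially the same route as the paper: peel off the integrals from the innermost one, split each increment into the direct dependence on the current variable plus the Wasserstein-contraction term $\Lip(P^{m}h)\le c\gamma^{m}\Lip(h)$ from (H2), pair each odd factor $c\gamma$ with the following even factor $c\gamma^{n_0-1}$ to harvest $\gamma_0<1$, and close the recursion $a\mapsto L+\gamma_0 a$ by a geometric series. The only (immaterial) discrepancy is in the final constant: unwinding your own recursion gives $\tilde L\le L(1+c\gamma)/(1-\gamma_0)$ (the paper's value) rather than $cL/(1-\gamma_0)$, since each odd step, not just the last, contributes the extra $c\gamma$ to the additive term.
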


\begin{proof} Define the functions $g_j: E^j\to\mathbb R$ by the formula
$$
\begin{aligned}
&g_j(y_0, y_1, \ldots, y_{j-1})=\int_{E}\pi_{{j}}(y_{j-1}, \d y_{j})\int_{E}\pi_{{j+1}}(y_{j}, \d y_{j+1})\times\cdots\\
&\times\int_{E}\pi_{2(k+n)}(y_{2(k+n)-1}, \d y_{2(k+n)})g_{n, k}(y_{1},\ldots, y_{2(k+n)})\,\,\,\text{for $j=1, \ldots, 2(k+n).$}
\end{aligned}
$$
Let $\mathcal L_{j, l}$  for  $j=1, \ldots, 2(k+n) $ and $l=0, \ldots, j-1$ denote the Lipschitz constant of $g_j$ with respect to $y_l$. Then the Lipschitz constant of $H_{n, k}$ is equal to $\mathcal L_{1, 0}$. It is obvious that $\mathcal L_{j, l}\le L$ for $0\le l<j-1$, $j>1$. To evaluate $\mathcal L_{j, j-1}$ fix $y_0, y_1,\ldots, y_{j-2}$ and $\tilde{y}_{j-1}, \hat {y}_{j-1}$. Then we have
$$
\begin{aligned}
&g_j(y_0, y_1, \ldots, y_{j-2}, \hat {y}_{j-1})-g_j(y_0, y_1, \ldots, y_{j-2}, \tilde{y}_{j-1})\\
&=\int_{E}\pi_{{j}}(\hat{y}_{j-1}, \d y_{j})g_{j+1}(y_0, y_1, \ldots, \hat {y}_{j-1}, y_{j})
-\int_{E}\pi_{{j}}(\tilde{y}_{j-1}, \d y_{j})g_{j+1}(y_0, y_1, \ldots, \tilde {y}_{j-1}, y_{j})\\
&= \int_{E}\pi_{{j}}(\hat{y}_{j-1}, \d y_{j}) (g_{j+1}(y_0, y_1, \ldots, \hat {y}_{j-1}, y_{j})
-g_{j+1}(y_0, y_1, \ldots, \tilde {y}_{j-1}, y_{j}))\\
&+\int_{E}\pi_{{j}}(\hat{y}_{j-1}, \d y_{j})g_{j+1}(y_0, y_1, \ldots, \tilde {y}_{j-1}, y_{j})
-\int_{E}\pi_{{j}}(\tilde{y}_{j-1}, \d y_{j})g_{j+1}(y_0, y_1, \ldots, \tilde {y}_{j-1}, y_{j})
\end{aligned}
$$
and consequently
$$
\begin{aligned}
&|g_j(y_0, y_1, \ldots, \hat {y}_{j-1})-g_j(y_0, y_1, \ldots, \tilde{y}_{j-1})|\le  \mathcal L_{j+1, j-1} \rho( \hat {y}_{j-1},  \tilde {y}_{j-1})\int_{E}\pi_{{j}}(\hat{y}_{j-1}, \d y_{j})\\
&+|\langle P^{k_{j}}\tilde{g}_{j+1}, \delta_{\hat{y}_j}\rangle - \langle P^{k_{j}}\tilde{g}_{j+1}, \delta_{\tilde{y}_j}\rangle |
\le L \rho( \hat {y}_{j-1},  \tilde {y}_{j-1})+c_j\mathcal L_{j+1, j} \rho( \hat {y}_{j-1},  \tilde {y}_{j-1}),
\end{aligned}
$$
where $c_j=c \gamma$ if $j$ odd, $c_j= c \gamma^{n_0-1}$ if $j$ even and $\tilde{g}_{j+1}(\cdot)=g_{j+1}(y_0, y_1, \ldots, y_{j-2}, \tilde{y}_{j-1}, \cdot)$. Hence we have
$$
\mathcal L_{j, j-1}\le  L +c_j\mathcal L_{j+1, j}\qquad\text{for $j=1, \ldots, 2(k+n) - 1$.}
$$
Since $\mathcal L_{2(k+n) , 2(k+n) -1}\le L$, an easy computation shows that
$$
\mathcal L_{1, 0}\le \frac{L(c\gamma+1)}{1-\gamma_0}.
$$
This completes the proof. 
\end{proof}

\section{The law of the iterated logarithm.}
\subsection{A martingale result}

We start with recalling a classical result due to C.C. Heyde and D.J. Scott \cite{HS}.
Let $\{S_n, \mathcal F_n: n\ge 0\}$ be a martingale on the probability space $(\Omega, \mathcal F, \mathbb P)$ where $\mathcal F_0=\{\Omega, \emptyset\}$ and $\mathcal F_n$ is the $\sigma$--field generated by $S_1, S_2,\ldots, S_n$ for $n>0$.
Let $S_0=Z_0=0$ $\mathbb P$-a.s. and $S_n=\sum_{k=1}^n Z_k$ for $n\ge 1$. Further, let $s_n^2=\mathbb ES_n^2<\infty.$

We consider the metric space $(C, \tilde \rho)$ of all real-valued continuous functions on $[0, 1]$ with
$$
\tilde\rho (x, y)=\sup_{0\le t\le 1}|x(t)-y(t)|\qquad\text{for $x, y\in C$}.
$$
Let $K$ be the set of absolutely continuous functions $x\in C$ such that $x(0)=0$ and $\int_0^1(x'(t))^2 \rm{ d} t\le 1$.

Define the real function $g$ on $[0, \infty)$ by $g(s)=\sup\{n: s_n^2\le s\}$.
We define a sequence of real random functions $\eta_n$ on $[0, 1]$, for $n> g(e)$,  by
$$
\eta_n(t)=\frac{S_k+(s_n^2t-s_k^2)(s_{k+1}^2-s_k^2)^{-1} Z_{k+1}}{\sqrt{2s_n^2\log\log s_n^2}}
$$ 
if $s_k^2\le s_n^2 t\le s_{k+1}^2$, $k=1, \ldots, n-1$ and 
$$
\eta_n(t)=0\qquad\text{for $n\le g (e)$.}
$$

\begin{proposition}
\label{pr1} {\bf (Theorem 1 in \cite{HS})}
If $s_n^2\to\infty$ and
\begin{equation}\label{e1}
\sum_{n=1}^{\infty} s_n^{-4}\mathbb E[Z_n^4{\bf 1}_{\{|Z_n|<\gamma s_n\}}]<\infty\qquad\text{for some $\gamma>0$},
\end{equation}
\begin{equation}\label{e2}
\sum_{n=1}^{\infty} s_n^{-1}\mathbb E[|Z_n|{\bf 1}_{\{|Z_n|\ge\epsilon s_n\}}]<\infty\qquad\text{for all $\epsilon>0$},
\end{equation}
\begin{equation}\label{e3}
s_n^{-2}\sum_{k=1}^n Z_k^2\to 1\quad \text{$\mathbb P$-a.s. as $n\to\infty$}
\end{equation}
hold, then $\{\eta_n\}_{n\ge 1}$ is relatively compact in $C$ and the set of its limit points coincides with $K$.
\end{proposition}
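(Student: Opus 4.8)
The statement to prove is Proposition~\ref{pr1}, which is quoted verbatim as Theorem~1 of Heyde--Scott \cite{HS}. Since the reference is cited as the source, the natural approach is simply to cite the original work rather than reconstruct the entire argument; nevertheless, let me sketch how one would actually prove it, in case a self-contained argument is wanted.

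The plan is to deduce the functional (Strassen-type) law of the iterated logarithm from the standard LIL for martingales via a Skorokhod-embedding plus Strassen-invariance-principle strategy. First I would use condition \eqref{e3}, which says the normalized quadratic variation $s_n^{-2}\sum_{k=1}^n Z_k^2\to 1$ a.s., to control the clock of the martingale. Combined with the fourth-moment truncation bound \eqref{e1} (a Kolmogorov-type condition that forces the contribution of the truncated increments to converge a.s. by the martingale convergence theorem after dividing by $s_n^2$), and the tail bound \eqref{e2} (which via Borel--Cantelli ensures that only finitely many increments exceed $\epsilon s_n$, so the truncated and untruncated martingales are eventually indistinguishable on the LIL scale), one reduces to the case of a martingale with uniformly small increments and quadratic variation asymptotic to $s_n^2$. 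Second, I would embed the martingale $(S_n)$ into a Brownian motion $W$ via the Skorokhod/Dubins representation: there exist stopping times $T_n$ with $S_n = W(T_n)$ and $\mathbb{E}[T_n - T_{n-1}\mid \mathcal{F}_{n-1}] = \mathbb{E}[Z_n^2\mid\mathcal{F}_{n-1}]$. Using \eqref{e1}--\eqref{e3} one shows $T_n / s_n^2 \to 1$ a.s., so the time-change is asymptotically the identity on the logarithmic scale.

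The third step is to transfer Strassen's functional LIL for Brownian motion: the random functions $\zeta_n(t) = W(s_n^2 t)/\sqrt{2 s_n^2 \log\log s_n^2}$ are, by Strassen's theorem, relatively compact in $(C,\tilde\rho)$ with limit set exactly $K$ (the unit ball of the Cameron--Martin space, which is precisely the set defined before the proposition). One then estimates $\tilde\rho(\eta_n, \zeta_n) \to 0$ a.s.: the polygonal interpolation defining $\eta_n$ differs from the piecewise-constant evaluation of $S$ at the times $s_k^2$ by at most $\max_k |Z_{k+1}|/\sqrt{2 s_n^2\log\log s_n^2}$, which tends to $0$ by \eqref{e2}; and $S_k = W(T_k)$ with $T_k \approx s_k^2$, so by the uniform continuity of Brownian paths on compacts and $T_n/s_n^2\to 1$, replacing $W(T_k)$ by $W(s_k^2)$ and then interpolating introduces only an error vanishing uniformly in $t\in[0,1]$. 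Hence $\{\eta_n\}$ and $\{\zeta_n\}$ have the same set of limit points, namely $K$, and the same relative compactness.

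The main obstacle is the fourth step, the quantitative comparison $\tilde\rho(\eta_n,\zeta_n)\to 0$: one must be careful that the maximal oscillation of $W$ over time intervals of length $|T_k - s_k^2|$, taken uniformly over $k \le n$ and divided by $\sqrt{2 s_n^2 \log\log s_n^2}$, genuinely vanishes. This requires a Lévy modulus-of-continuity estimate for $W$ together with a sufficiently strong a.s. rate in $T_n/s_n^2 \to 1$, and it is here that conditions \eqref{e1} and \eqref{e2} do their real work (beyond merely ensuring $s_n^2\to\infty$ meaningfully). Since all of this is carried out in \cite{HS}, for the purposes of this paper I would simply invoke that reference and not reproduce the argument.

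\begin{proof}
This is Theorem~1 of Heyde and Scott \cite{HS}.
\end{proof}
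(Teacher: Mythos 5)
Your proposal is correct and matches the paper exactly: Proposition~\ref{pr1} is stated in the paper as a quoted result (Theorem~1 of Heyde and Scott \cite{HS}) and is not proved there, only cited, which is precisely what you do. Your additional Skorokhod-embedding sketch is a reasonable outline of the original argument but is not needed, since the paper itself supplies no proof beyond the reference.
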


\subsection{Application to Markov chains}

Let $\psi :E\to \mathbb R$ be a Lipschitz function such that $\langle\psi, \mu_*\rangle=0$, otherwise we could consider $ \tilde{\psi}=\psi-\langle\psi, \mu_*\rangle $. Let $L>0$ denote its Lipschitz constant. Let $(X_n)_{n\ge 0}$ be a Markov chain with the initial distribution $\mu$ satisfying conditions (H0)--(H3).

We have $\sum_{i=0}^{\infty} |P^i\psi(x)| = \sum_{i=0}^{\infty} |\langle \psi, \delta_xP^i\rangle-\langle\psi, \mu_*P^i\rangle|\le
c d(\delta_x,\mu_*) \sum_{i=0}^{\infty}\gamma^i<\infty$, by (H2). Thus we may define the function
$$
\chi(x):=\sum_{i=0}^{\infty}P^i\psi(x)\qquad\text{for $x\in E$.}
$$
We easily check that $\chi$ is a Lipschitz function.

 It is well known that 
$$
S_n=\chi (X_n)-\chi (X_0)+\sum_{i=0}^n \psi(X_i)\qquad\text{for $n\ge 0$}
$$
is a martingale on the space $(E^{\mathbb N}, \mathcal B(E)^{\otimes\mathbb N}, \mathbb P_{\mu})$ with respect to the natural filtration and its square integrable martingale differences are of the form
$$
Z_n=\chi (X_n)-\chi (X_{n-1})+\psi(X_n)\quad\text{for $n\ge 1$}.
$$
Observe that $\mathbb E_{\mu_*} Z_1^2<\infty$. Indeed, we easily check that $x\to \mathbb E_x (Z_1^2\wedge k)$ for any $k\ge 1$ is a bounded continuous function. Further, since $\mathbb E_{\mu P^n}(Z_1^2\wedge k)=\int_E \mathbb E_x(Z_1^2\wedge k)\mu P^n(\d x)\to \mathbb E_{\mu_*} (Z_1^2\wedge k)$ for any $k\ge 1$ as $n\to\infty$ and $\sup_{n\ge 0}\mathbb E_{\mu P^n}(Z_1^2)<\infty$, we obtain that $\mathbb E_{\mu_*} (Z_1^2)<\infty$. 

Set
$$
\sigma^2:=\mathbb E_{\mu_*} Z_1^2.
$$
We have 
\begin{equation}\label{WLLN}
\lim_{n\to\infty}\mathbb E_{\mu P^n} Z_1^2=\lim_{n\to\infty}\mathbb E_{\mu} Z_n^2=\sigma^2.
\end{equation}
In fact, since $\chi$ and $\psi$ are Lipschitzean, we have $\sup_{n\geq 1} \mathbb E_{\mu} |Z_n|^{2+\delta}<\infty$, by (H3). Further, observe that
$$
\sup_{n\ge 1}\mathbb E_{\mu} (Z_n^2 {\bf 1}_{\{|Z_n|^2\ge k\}})\le k^{-\delta/2} \sup_{n\geq 1} \mathbb E_{\mu} |Z_n|^{2+\delta}\to 0
$$
as $k\to \infty$. Therefore, condition (\ref{WLLN}) follows from the fact that $\mathbb E_{\mu P^n}(Z_1^2\wedge k)\to \mathbb E_{\mu_*} (Z_1^2\wedge k)$ as $n\to\infty$ for any $k\ge 1$. Finally, we obtain
$$
\lim_{n\to\infty}\frac{s_n^2}{n}=\lim_{n\to\infty}\frac{\mathbb E_{\mu} S_n^2}{n}=\lim_{n\to\infty}\frac{\sum_{i=1}^n \mathbb E_{\mu} Z_i^2}{n}=\sigma^2.
$$

\begin{lemma}
The square integrable martingale differences $(Z_n)_{n\ge 1}$ satisfy the following condition:
\begin{equation}
\frac{1}{n}\sum_{l=1}^n Z_l^2\to \sigma^2\quad \text{$\mathbb P_{\mu}$-a.s. as $n\to\infty$}
\end{equation}
and consequently if $\sigma^2>0$ condition (\ref{e3}) holds.
\end{lemma}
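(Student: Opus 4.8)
The plan is to prove the almost sure limit $\tfrac1n\sum_{l=1}^n Z_l^2\to\sigma^2$; the ``consequently'' clause is then immediate, since we have already shown $s_n^2/n\to\sigma^2$, whence $s_n^{-2}\sum_{l=1}^n Z_l^2=(n/s_n^2)\cdot\tfrac1n\sum_{l=1}^n Z_l^2\to\sigma^{-2}\cdot\sigma^2=1$ $\mathbb P_\mu$-a.s.\ as soon as $\sigma^2>0$, which is \REQ{e3}. Writing $Z_l=\Phi(X_{l-1},X_l)$ with $\Phi(x,y):=\chi(y)-\chi(x)+\psi(y)$ (Lipschitz in each variable, with constant at most $\Lip\chi+\Lip\psi$) and $f:=\Phi^2$, the function $f$ is Lipschitz on bounded sets, and since $(X_{l-1},X_l)$ has law $\mu_*\otimes\pi$ under $\mathbb P_{\mu_*}$ one has $\int_E\int_E f(x,y)\,\pi(x,\d y)\,\mu_*(\d x)=\mathbb E_{\mu_*}Z_1^2=\sigma^2$. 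So the goal is the law of large numbers $\tfrac1n\sum_{l=1}^n f(X_{l-1},X_l)\to\sigma^2$, $\mathbb P_\mu$-a.s. (Under the stationary law $\mathbb P_{\mu_*}$ this is Birkhoff's ergodic theorem, $\mu_*$ being ergodic; the work is to obtain it for the non-stationary initial law $\mu$, and this is where \REQ{eqh4} and Lemma~1 enter.)

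First I would truncate. Using $\sup_{l\ge1}\mathbb E_\mu|Z_l|^{2+\delta}<\infty$ (established above), $\sum_l\mathbb P_\mu(Z_l^2>l)\le\sum_l l^{-(1+\delta/2)}\mathbb E_\mu|Z_l|^{2+\delta}<\infty$, so by Borel--Cantelli $Z_l^2=Z_l^2\wedge l$ for all but finitely many $l$, $\mathbb P_\mu$-a.s.; hence $\tfrac1n\sum_{l=1}^nZ_l^2$ and $\tfrac1n\sum_{l=1}^n(Z_l^2\wedge l)$ have the same almost sure limiting behaviour. Moreover $\mathbb E_\mu(Z_l^2\wedge l)=\mathbb E_\mu Z_l^2-\mathbb E_\mu(Z_l^2-l)^+$ with $\mathbb E_\mu(Z_l^2-l)^+\le l^{-\delta/2}\sup_{m}\mathbb E_\mu|Z_m|^{2+\delta}\to0$, so by Ces\`aro summation (and $\mathbb E_\mu Z_l^2\to\sigma^2$, shown above) $\tfrac1n\sum_{l=1}^n\mathbb E_\mu(Z_l^2\wedge l)\to\sigma^2$. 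Putting $f_l:=f\wedge l$ --- which is Lipschitz in each variable with constant $O(\sqrt l\,)$, since $t\mapsto t\wedge l$ is $1$-Lipschitz and $\Phi^2$ has Lipschitz constant $\le2\sqrt l\,(\Lip\chi+\Lip\psi)$ on $\{\Phi^2\le l\}$ --- and $V_l:=f_l(X_{l-1},X_l)-\mathbb E_\mu f_l(X_{l-1},X_l)$ (so $|V_l|\le2l$), the task reduces to showing $\tfrac1n\sum_{l=1}^n V_l\to0$, $\mathbb P_\mu$-a.s.

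The core is a moment estimate for the partial sums of $(V_l)$, and this is where Lemma~1 is used. Fix an even integer $2p$ and expand $\mathbb E_\mu\big[(\sum_{l=1}^n V_l)^{2p}\big]$ as a sum over $2p$-tuples $l_1\le\cdots\le l_{2p}\le n$ of the quantities $\mathbb E_\mu[V_{l_1}\cdots V_{l_{2p}}]$. By the Markov property, after merging coordinates that coincide or are close together and (where it helps) factoring extra powers $P^{n_0-1}$ into the kernels, each such expectation is an iterated integral of powers of $P$ of exactly the type \REQ{e1l}, the $g$-function being built from the $f_{l_i}$'s. Two facts then tame the sum. First, by Lemma~1 the functions obtained by successively integrating out the later coordinates remain Lipschitz with a constant that is controlled by the Lipschitz constant of the product of the $f_{l_i}$'s --- hence only polynomial in $n$ through the truncation levels --- and, decisively, \emph{not} growing with the number of integrations. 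Second, each $V_l$ is centred, so every time two of the chosen times are separated by a long gap the contraction \REQ{eqh4} yields a factor $\le\gamma_0<1$ (this is precisely the mechanism in the proof of Lemma~1), at the price of a bounded number of factors $\rho_{x_0}(X_j)+\mathrm{const}$ which are absorbed by H\"older's inequality and \REQ{H3}; the few tuples whose indices cluster (with few long gaps) are estimated crudely using $|V_l|\le2l$. Summing the resulting geometric series over the gap lengths leaves a bound of the form $\mathbb E_\mu\big[(\sum_{l=1}^n V_l)^{2p}\big]\le C_p\,n^{p+\theta_p}$, the exponent $\theta_p$ coming from the truncation and from the $(2+\delta)$-moment losses in H\"older's inequality.

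Choosing $p$ large enough that $p+\theta_p<2p-1$, Chebyshev's inequality gives $\mathbb P_\mu\big(|\sum_{l=1}^n V_l|>\varepsilon n\big)\le C_{p,\varepsilon}\,n^{\,p+\theta_p-2p}$ with summable right-hand side, so Borel--Cantelli yields $\tfrac1n\sum_{l=1}^n V_l\to0$ $\mathbb P_\mu$-a.s., and with the two preceding paragraphs $\tfrac1n\sum_{l=1}^n Z_l^2\to\sigma^2$ $\mathbb P_\mu$-a.s., hence \REQ{e3}. I expect the main obstacle to be the moment estimate of the third paragraph: one must organise the $2p$-fold expansion so that Lemma~1 genuinely applies (in particular engineering the alternating ``every other gap $\ge n_0-1$'' structure), track how the growing truncation levels propagate into the Lipschitz constants, and balance the moment order $2p$ against those polynomial losses and against the order $2+\delta$ of the moments furnished by \REQ{H3} so that the Borel--Cantelli series converges; the remaining steps are routine.
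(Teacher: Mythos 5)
Your preliminary reductions (the ``consequently'' step, the truncation $Z_l^2\wedge l$ via Borel--Cantelli, the Ces\`aro argument for the means) are fine, but the heart of your plan --- the bound $\mathbb{E}_\mu\bigl[(\sum_{l\le n}V_l)^{2p}\bigr]\le C_p\,n^{p+\theta_p}$ with $p+\theta_p<2p-1$, followed by Chebyshev and Borel--Cantelli over \emph{all} $n$ --- is not just unproved, it fails in general under {\bf (H3)}. The point is that $Z_l^2$ has only moments of order $1+\delta/2$ and you truncate at the linearly growing level $l$. Take for instance $X_{n+1}=\gamma X_n+\xi_{n+1}$ on $\mathbb{R}$ with i.i.d.\ innovations having finite $(2+\delta)$-th but infinite fourth moment and $\psi(x)=x-\langle \mathrm{id},\mu_*\rangle$; this satisfies (H0)--(H3), and $Z_l$ inherits the heavy tail of $\xi_l$. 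Then with probability of order $n^{-\delta'}$ (for any $\delta'>\delta/2$, so $\delta'\le 1$ is possible when $\delta<2$) some single $l\in(n/2,n]$ has $Z_l^2\wedge l$ of order $n$, and this one term already forces $|\sum_{l\le n}V_l|>\varepsilon n$. Hence $\mathbb{P}_\mu(|\sum_{l\le n}V_l|>\varepsilon n)\gtrsim n^{-\delta'}$ is \emph{not} summable, and correspondingly $\mathbb{E}_\mu\bigl[(\sum_{l\le n}V_l)^{2p}\bigr]\gtrsim n^{2p-\delta'}$, which is incompatible with $p+\theta_p<2p-1$ for every $p$. So no moment order can rescue the final Borel--Cantelli step; you would need an additional device (e.g.\ deviations along a geometric subsequence combined with positivity/monotonicity or a maximal inequality, in the spirit of Marcinkiewicz--Zygmund). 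Separately, your appeal to Lemma~1 overstates what it gives: it provides uniform Lipschitz constants for iterated kernels with the specific alternating pattern $k_l\ge n_0-1$, not a factorization/decorrelation inequality for products of centred blocks with arbitrary gaps, and for products of truncated factors the relevant Lipschitz constants grow polynomially in $n$, so the ``decisively not growing'' claim does not hold for the functions your expansion actually produces.

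For contrast, the paper's proof avoids all quantitative deviation estimates. It splits the sum into the $n_0$ subsequences $(Z^2_{i+ln_0})_{l\ge1}$ (to fit the gap structure of Lemma~1), shows via Lemma~1 that the truncated finite-horizon functionals $H_{n,k}$ are Lipschitz with a constant independent of $n,k$, hence that $x\mapsto\mathbb{E}_x\bigl(|\liminf_n \frac1n\sum_{l=1}^n Z^2_{i+ln_0}-\sigma^2|\wedge1\bigr)$ (and the limsup analogue) is continuous; since this functional is unchanged by discarding finitely many terms, its integral against $\mu$ equals its integral against $\mu P^{i+mn_0}$, which converges to the integral against $\mu_*$ by weak convergence \REQ{eqh4}, and the latter vanishes by Birkhoff's theorem under the stationary ergodic law. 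That soft transfer from $\mu_*$ to $\mu$ is exactly what replaces the moment estimate you would otherwise have to fight for.
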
 
\begin{proof} First observe that to finish the proof it is enough to show that for any $i\in\{1,\ldots, n_0\}$ we have
$$
\frac{1}{n}\sum_{l=1}^n Z_{i+l n_0}^2\to \sigma^2\quad \text{$\mathbb P_{\mu}$-a.s. as $n\to\infty$}.
$$

 If we show that both the functions
$$
x\to \mathbb E_x(|\liminf_{n\to\infty}(1/n\sum_{l=1}^n Z_{i+l n_0}^2)-\sigma^2| \wedge 1)
$$
and
$$
x\to \mathbb E_x(|\limsup_{n\to\infty}(1/n\sum_{l=1}^n Z_{i+l n_0}^2)-\sigma^2| \wedge 1)
$$
are continuous, we shall be done. Indeed, then we have
$$
\begin{aligned}
&\mathbb E_{\mu}(|\liminf_{n\to\infty}(1/n\sum_{l=1}^n Z_{i+l n_0}^2)-\sigma^2| \wedge 1)=\int_E \mathbb E_x(|\liminf_{n\to\infty}(1/n\sum_{l=1}^n Z_{i+l n_0}^2)-\sigma^2| \wedge 1)\mu(\d x)\\
&=\int_E \mathbb E_x(|\liminf_{n\to\infty}(1/n\sum_{l=1}^n Z_{i+l n_0}^2)-\sigma^2| \wedge 1)\mu P^{i+m n_0}(\d x)\to \mathbb E_{\mu_*}(|\liminf_{n\to\infty}(1/n\sum_{l=1}^n Z_{i+l n_0}^2)-\sigma^2| \wedge 1),
\end{aligned}
$$
as $m\to+\infty$, by the fact that $\mu P^{i+m n_0}$ converges weakly to $\mu_*$ as $m\to+\infty$. On the other hand, from the Birkhoff individual ergodic theorem we have 
$$
\mathbb E_{\mu_*}(|\liminf_{n\to\infty}(1/n\sum_{l=1}^n Z_{i+l n_0}^2)-\sigma^2| \wedge 1)=0
$$
and consequently
$$
\mathbb E_{\mu}(|\liminf_{n\to\infty}(1/n\sum_{l=1}^n Z_{i+l n_0}^2)-\sigma^2| \wedge 1)=0,
$$
which, in turn, gives
$$
\liminf_{n\to\infty}(1/n\sum_{l=1}^n Z_{i+l n_0}^2)=\sigma^2 \quad \text{$\mathbb P_{\mu}$-a.s.}
$$
Analogously we may show that
$$
\limsup_{n\to\infty}(1/n\sum_{l=1}^n Z_{i+l n_0}^2)=\sigma^2 \quad \text{$\mathbb P_{\mu}$-a.s.}
$$
The remainder of the proof is devoted to showing the continuity of the relevant functions. Again, we restrict to the first function, since the proof for the second one goes in almost the same manner.

Observe that
$$
\begin{aligned}
&\mathbb E_{x}(|\liminf_{n\to\infty}(1/n\sum_{l=1}^n Z_{i+l n_0}^2)-\sigma^2| \wedge 1)\\
&=\lim_{n \rightarrow \infty}  \lim_{k \rightarrow \infty} \mathbb E_{x}\left(\left|  \min \left\{  \frac{1}{n} \sum_{l=1}^{n} Z_{i+l n_0}^2-\sigma^2 , \ldots , \frac{1}{n+k} \sum_{l=1}^{n+k} Z_{i+l n_0}^2 -\sigma^2  \right\}  \right|\wedge 1\right)\\
&=\lim_{n \rightarrow \infty}  \lim_{k \rightarrow \infty} H_{n, k} (x),
\end{aligned}
$$
where
$$
\begin{aligned}
&H_{n, k}(x)=\mathbb E_x(|  \min \{  {1}/{n} ( \sum_{l=1}^{n} Z_{i+ln_0}^2  \wedge n(1+\sigma^2) )-\sigma^2 , \ldots , \\
&{1}/{(n+k)} ( \sum_{l=1}^{n+k} Z_{i+l n_0}^2 \wedge (n+k)(1+\sigma^2)-\sigma^2 ) \} |\wedge 1)\\
&=\mathbb E_x(|  \min \{  {1}/{n} ( \sum_{l=1}^{n} (\chi (X_{i+l n_0})-\chi (X_{i-1+l n_0})+\psi(X_{i+l n_0}))^2  \wedge n(1+\sigma^2) )-\sigma^2 , \ldots , \\
&{1}/{(n+k)} ( \sum_{l=1}^{n+k} (\chi (X_{i+l n_0})-\chi (X_{i-1+l n_0})+\psi(X_{i+l n_0}))^2 \wedge (n+k)(1+\sigma^2)-\sigma^2 ) \} |\wedge 1).\\
\end{aligned}
$$
Set
$$
\begin{aligned}
g_{n, k}(y_1, &\ldots, y_{2(n+k)})
=|  \min \{  {1}/{n} ( \sum_{l=1}^{n} (\chi (y_{2l})-\chi (y_{2l-1})+\psi(y_{2l}))^2  \wedge n(1+\sigma^2) )-\sigma^2 , \ldots , \\
&{1}/{(n+k)} ( \sum_{l=1}^{n+k} (\chi (y_{2l})-\chi (y_{2l-1})+\psi(y_{2l}))^2 \wedge (n+k)(1+\sigma^2)-\sigma^2 ) \} |\wedge 1
\end{aligned}
$$
so that
$$
H_{n,k}(x) = \mathbb E_x(g_{n, k}(X_{i+n_0-1}, X_{i+n_0}, X_{i+2n_0-1}, X_{i+2n_0}, \ldots, X_{i+2(n+k)n_0-1}, X_{i+2(n+k)n_0})).
$$ 
Observe that $H_{n, k}$ is given by formula (\ref{e1l}). If we show that there exists $L$ such that $g_{n, k}$ is Lipschitz continuous in each variable with the Lipschitz constant $L$ (independent of $n, k$), then all $H_{n, k}$ are Lipschitzean with the same Lipschitz constant $\tilde L$, by Lemma 1. Consequently $\lim_{n \rightarrow \infty}  \lim_{k \rightarrow \infty} H_{n, k} $ is Lipschitzean and in particular continuous. Since minimum of any finite family of functions which are Lipschitz continuous in each variable with the Lipschitz constant $L$ is Lipschitz continuous in each variable with the same Lipschitz constant $L$, to finish the proof it is enough to observe that the function
$$
(y_1, \ldots, y_{2p})\to{1}/{p} ( \sum_{l=1}^{p} (\chi (y_{2l})-\chi (y_{2l-1})+\psi(y_{2l}))^2  \wedge p(1+\sigma^2) )-\sigma^2
$$
is Lipschitz continuous in each variable with the Lipschitz constant $L$ for fixed $L>0$. On the other hand, each term in the above sum is Lipschitz continuous in each variable with the Lipschitz constant
$$
(1/p) (\Lip \chi +\Lip\psi) 2p (1+\sigma^2)=2(\Lip \chi +\Lip\psi)(1+\sigma^2).
$$
Observe that each variable appears in one term in the above sum. Hence $L\le  2(\Lip \chi +\Lip\psi)(1+\sigma^2)$, which finishes the proof.
\end{proof}

 Note that following the proof of our previous lemma we are able to show that the considered Markov chain satisfies the strong law of large numbers {\bf (SLLN)}. This result however directly follows from Theorem 2.1 in \cite{Sh}.

\begin{lemma}
Let $\sigma^2>0$. 
Under the assumptions (H0)--(H3) the square integrable martingale differences $(Z_n)_{n\ge 1}$ satisfy conditions (\ref{e1}), (\ref{e2}).
\end{lemma}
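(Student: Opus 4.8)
The plan is to deduce both (\ref{e1}) and (\ref{e2}) from the two facts already established above: the uniform moment bound
$$
M:=\sup_{n\ge 1}\mathbb E_{\mu}|Z_n|^{2+\delta}<\infty,
$$
which comes from (H3) together with the Lipschitz property of $\chi$ and $\psi$, and the asymptotics $s_n^2/n\to\sigma^2>0$, which supplies a constant $a>0$ and an index $N$ with $s_n^2\ge a n$ (in particular $s_n>0$) for all $n\ge N$. I would first reduce to the case $0<\delta\le 2$: if $\delta>2$, Lyapunov's inequality gives $\sup_n\mathbb E_{\mu}|Z_n|^{2+\delta'}<\infty$ also for $\delta'=2$, and replacing $\delta$ by $\delta'$ only weakens what we need. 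Next I would note that the first finitely many terms of each series are finite — for $n<N$ the truncated moment $\mathbb E_{\mu}[Z_n^4{\bf 1}_{\{|Z_n|<\gamma s_n\}}]$ is at most $(\gamma s_n)^4$ — so it is enough to estimate the terms with $n\ge N$.

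Both conditions then follow from the same elementary truncation estimate. For (\ref{e2}), on the event $\{|Z_n|\ge\epsilon s_n\}$ one has $|Z_n|\le(\epsilon s_n)^{-(1+\delta)}|Z_n|^{2+\delta}$, so that for $n\ge N$
$$
s_n^{-1}\mathbb E_{\mu}\bigl[|Z_n|{\bf 1}_{\{|Z_n|\ge\epsilon s_n\}}\bigr]\le \epsilon^{-(1+\delta)}s_n^{-(2+\delta)}\mathbb E_{\mu}|Z_n|^{2+\delta}\le M\epsilon^{-(1+\delta)}(a n)^{-(1+\delta/2)}.
$$
For (\ref{e1}), on $\{|Z_n|<\gamma s_n\}$ one has $Z_n^4=|Z_n|^{2+\delta}|Z_n|^{2-\delta}\le(\gamma s_n)^{2-\delta}|Z_n|^{2+\delta}$ (here the reduction $\delta\le 2$ is used), so that for $n\ge N$ and any $\gamma>0$
$$
s_n^{-4}\mathbb E_{\mu}\bigl[Z_n^4{\bf 1}_{\{|Z_n|<\gamma s_n\}}\bigr]\le \gamma^{2-\delta}s_n^{-(2+\delta)}\mathbb E_{\mu}|Z_n|^{2+\delta}\le M\gamma^{2-\delta}(a n)^{-(1+\delta/2)}.
$$
Since $1+\delta/2>1$, the series $\sum_n n^{-(1+\delta/2)}$ converges, which yields (\ref{e2}) for every $\epsilon>0$ and (\ref{e1}) for, say, $\gamma=1$.

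I do not expect any real obstacle here; the argument is just bookkeeping with H\"older-type bounds. The only points that need a little attention are the reduction to $\delta\le 2$, which is what lets us absorb the factor $|Z_n|^{2-\delta}$ on the truncation event, and the observation that the hypothesis $\sigma^2>0$ is precisely what makes $s_n^{-(2+\delta)}$ comparable to $n^{-(1+\delta/2)}$, the latter being summable exactly because $\delta>0$.
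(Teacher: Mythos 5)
Your proof is correct and takes essentially the same route as the paper: the identical truncation bounds (estimating $Z_n^4{\bf 1}_{\{|Z_n|<\gamma s_n\}}$ by $(\gamma s_n)^{2-\delta}|Z_n|^{2+\delta}$ and $|Z_n|{\bf 1}_{\{|Z_n|\ge\epsilon s_n\}}$ by $|Z_n|^{2+\delta}/(\epsilon s_n)^{1+\delta}$), combined with $\sup_n\mathbb E_{\mu}|Z_n|^{2+\delta}<\infty$ and the summability of $s_n^{-2-\delta}$ coming from $s_n^2/n\to\sigma^2>0$. The only difference is that you make explicit two small points the paper leaves implicit, namely the harmless reduction to $\delta\le 2$ and the finiteness of the initial terms where $s_n$ may be small.
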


\begin{proof} Since $\sup_{n\geq 1} \mathbb E_{\mu} |Z_n|^{2+\delta}<\infty$, $\delta$ is the constant given in (H3), we have
$$
\sum_{n=1}^{\infty} s_n^{-4}\mathbb E_{\mu}[Z_n^4{\bf 1}_{\{|Z_n|<\gamma s_n\}}]\le\sum_{n=1}^{\infty} s_n^{-4}\gamma^{2-\delta} s_n^{2-\delta} \mathbb E_{\mu} |Z_n|^{2+\delta}\le\gamma^{2-\delta}\sup_{n\ge 1}\mathbb E_{\mu} |Z_n|^{2+\delta}\sum_{n=1}^{\infty} s_n^{-2-\delta}.
$$
On the other hand, the condition $s_n^2/n\to\sigma^2$ as $n\to \infty$ gives $\sum_{n=1}^{\infty} s_n^{-2-\delta}<\infty$, which completes the proof of condition (\ref{e1}).

To show condition (\ref{e2}) observe that
$$
\sum_{n=1}^{\infty} s_n^{-1}\mathbb E_{\mu}[|Z_n|{\bf 1}_{\{|Z_n|\ge\epsilon s_n\}}]\le\sum_{n=1}^{\infty} s_n^{-1}\mathbb E_{\mu}[|Z_n|^{2+\delta}/(\epsilon s_n)^{1+\delta}]\le\epsilon^{-1-\delta}\sup_{n\ge 1}\mathbb E_{\mu} |Z_n|^{2+\delta}\sum_{n=1}^{\infty} s_n^{-2-\delta}<\infty.
$$
The proof is complete.
\end{proof}

 \subsection{The Law of Iterated Logarithm for Markov chains}

 \begin{theorem}
 Let $(X_n)_{n\ge 0}$ be a Markov chain  with an initial distribution $\mu$ satisfying conditions (H0)--(H3). If $\psi$ is a Lipschitz function with $\langle \psi, \mu_*\rangle =0$ and $\sigma^2>0$, then $\mathbb P_{\mu}$-a.s. the sequence
 $$
\theta_n(t)= \frac{\sum_{i=1}^k\psi(X_i)+(nt-k) \psi(X_{k+1})}{\sigma\sqrt{2n\log\log n}}
$$ 
if $k\le n t\le k+1$, $k=1, \ldots, n-1$ for $t>0$, $n> e$ and $\theta_n(t)=0$ otherwise is relatively compact in $C$ and the set of its limit points coincides with $K$.
\end{theorem}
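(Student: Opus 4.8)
The plan is to deduce the invariance principle for the partial sums $\sum_{i=1}^k\psi(X_i)$ from Proposition~\ref{pr1} applied to the martingale $S_n=\chi(X_n)-\chi(X_0)+\sum_{i=0}^n\psi(X_i)$ with differences $Z_n=\chi(X_n)-\chi(X_{n-1})+\psi(X_n)$, and then to transfer the conclusion from $\{\eta_n\}$ to $\{\theta_n\}$ by controlling the (uniformly small) error between the two families. The bulk of the hypotheses of Proposition~\ref{pr1} has already been checked: $s_n^2=\mathbb E_\mu S_n^2$ satisfies $s_n^2/n\to\sigma^2>0$ (hence $s_n^2\to\infty$), Lemma~3 gives conditions (\ref{e1}) and (\ref{e2}), and Lemma~2 gives condition (\ref{e3}). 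So the first step is simply to invoke Proposition~\ref{pr1}: the sequence $\{\eta_n\}_{n\ge1}$ built from $(S_n)$ is $\mathbb P_\mu$-a.s. relatively compact in $(C,\tilde\rho)$ with set of limit points exactly $K$.

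The second step is the comparison. Write $T_k=\sum_{i=1}^k\psi(X_i)$ and recall $S_k=\chi(X_k)-\chi(X_0)+\psi(X_0)+T_k$, so $S_k-T_k=\chi(X_k)-\chi(X_0)+\psi(X_0)$. First I would replace the normalisation: since $s_n^2=\sigma^2 n(1+o(1))$, one has $\sqrt{2s_n^2\log\log s_n^2}=\sigma\sqrt{2n\log\log n}\,(1+o(1))$, and also the random time-change index (the $k$ with $s_k^2\le s_n^2 t\le s_{k+1}^2$ versus $k\le nt\le k+1$) differs only by a factor tending to $1$ and a bounded additive perturbation; uniformly in $t\in[0,1]$ this changes $\eta_n$ by $o(1)$ in $\tilde\rho$, using that $s_{k+1}^2-s_k^2=\mathbb E_\mu Z_{k+1}^2$ stays bounded (indeed $\to\sigma^2$), so the linear interpolation is well-behaved. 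The genuinely new point is that $\sup_{k\le n}|\chi(X_k)-\chi(X_0)+\psi(X_0)|/\sqrt{2n\log\log n}\to0$ $\mathbb P_\mu$-a.s. For this I would show $\chi(X_n)/\sqrt{n\log\log n}\to0$ a.s.; a clean way is to use $\sup_{n\ge0}\mathbb E_\mu\rho_{x_0}^{2+\delta}(X_n)<\infty$ (H3) together with the Lipschitz bound $|\chi(X_n)|\le \mathrm{Lip}\,\chi\cdot\rho_{x_0}(X_n)+|\chi(x_0)|$, which gives $\sup_n\mathbb E_\mu|\chi(X_n)|^{2+\delta}<\infty$; then by Markov's inequality and Borel--Cantelli along the subsequence $n_j=\lfloor j^{2/\delta}\rfloor$ (or any polynomially growing subsequence making $\sum_j \mathbb E|\chi(X_{n_j})|^{2+\delta} n_j^{-(1+\delta/2)}<\infty$) one gets $\chi(X_{n_j})=o(\sqrt{n_j})$, and one fills the gaps between consecutive $n_j$ by the same moment bound since $n_{j+1}/n_j\to1$. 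Consequently $\tilde\rho(\theta_n,\eta_n)\to0$ $\mathbb P_\mu$-a.s.

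The third step is the soft topological conclusion: if $\{\eta_n\}$ is relatively compact with limit set $K$ and $\tilde\rho(\theta_n,\eta_n)\to0$, then $\{\theta_n\}$ is relatively compact with the same limit set $K$. This is a standard fact — relative compactness is inherited because $\{\theta_n\}$ is eventually contained in an $\varepsilon$-neighbourhood of the relatively compact set $\overline{\{\eta_n\}}$, and the limit-point sets coincide because along any subsequence $\theta_{n_j}\to x$ iff $\eta_{n_j}\to x$. Putting the three steps together yields the theorem.

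I expect the main obstacle to be the second step, specifically the a.s. negligibility of $\chi(X_n)$ after normalisation by $\sqrt{2n\log\log n}$: one must be slightly careful that (H3) controls only moments of $\rho_{x_0}(X_n)$ uniformly in $n$, not a maximal function, so the Borel--Cantelli argument has to be run along a subsequence and then interpolated, and one must check that the interpolation error and the reparametrisation of time in passing from $\eta_n$ to $\theta_n$ are also uniformly $o(1)$. Everything else is either already established in the excerpt (Lemmas~2 and~3, Proposition~\ref{pr1}) or is routine metric-space bookkeeping.
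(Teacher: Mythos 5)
Your proposal is correct and follows essentially the same route as the paper: invoke the Heyde--Scott invariance principle via Lemmas 2 and 3 together with $s_n^2/n\to\sigma^2$, then transfer from $\eta_n$ to $\theta_n$ by a time-change/renormalisation argument and by showing that the corrector terms $\chi(X_k)-\chi(X_0)$ (and $\chi(X_{k+1})-\chi(X_k)$ in the interpolation) are a.s.\ negligible after normalisation, using the $(2+\delta)$-moment bound from (H3), the Lipschitz property of $\chi$, Chebyshev/Markov and Borel--Cantelli. The only cosmetic difference is that your subsequence-plus-interpolation detour is unnecessary, since $\sum_n n^{-(1+\delta/2)}<\infty$ makes the Borel--Cantelli argument work directly over all $n$, which is exactly what the paper does.
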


 \begin{proof} First observe that since $s_n^2/n\to \sigma^2>0$ as $n\to\infty$ we have 
 $$
 \frac{\sqrt{2s_n^2\log\log s_n^2}}{\sigma\sqrt{2n\log\log n}}\to 1\quad\text{as $n\to\infty$.}
 $$
 Consequently, from Lemmas 2 and 3 it follows that the sequence
 $$
\eta_n(t)=\frac{S_k+(s_n^2t-s_k^2)(s_{k+1}^2-s_k^2)^{-1} Z_{k+1}}{\sigma\sqrt{2n\log\log n}}
$$ 
if $s_k^2\le s_n^2 t\le s_{k+1}^2$, $k=1, \ldots, n-1$ for $t>0, n>e$ and $\eta_n(t)=0$ otherwise is relatively compact in $C$ and the set of its limit points coincides with $K$, due to Heyde and Scott \cite{HS}. Let $t\in (0, 1]$ and $n\ge 1$. Observe that 
if $k\le n t \le k+1$, then
$$
\frac{k\sigma^2}{s_k^2} s_k^2\le \frac{n\sigma^2}{s_n^2} t s_n^2\le \frac{(k+1)\sigma^2}{s_{k+1}^2} s_{k+1}^2.
$$
Set
$$
\hat{\eta}_n(t)=\frac{S_k+(nt-k)Z_{k+1}}{\sigma\sqrt{2n\log\log n}},
$$
where $k\ge 1$ such that $k\le  n t\le k+1$. 
Since $(n\sigma^2)/s_n^2\to 1$ as $n\to\infty$ for any $\varepsilon>0$ holds
$$
(1-\varepsilon)s_k^2\le (1+\varepsilon) s_n^2 t\le (1+\varepsilon)^2 (1-\varepsilon)^{-1}s_{k+1}^2
$$
for all $n$ large enough. Hence there is $t_*\in [t(1-\varepsilon)(1+\varepsilon)^{-1}, t (1+\varepsilon)(1-\varepsilon)^{-1}]$ such that $s_k^2\le s_n^2 t_*\le s_{k+1}^2$. On the other hand, the diameter of the interval 
$[s_k^2/s_n^2, s_{k+1}^2/s_n^2]$ for a fixed $k=1, \ldots, n-1$ converges to $0$ as $n\to \infty$.
Consequently, for any $t>0$ and $n>e$ there exists $t_n>0$ such that
$\hat{\eta}_n(t)=\eta_n(t_n)$ and $t_n\to t$ as $n\to\infty$. Since the sequence $(\eta_n(t))_{n> e}$ is relatively compact in $C$ and the set of its limit points coincides with $K$, the sequence $(\hat{\eta}_n(t))_{n>e}$ is also relatively compact and has the same set of limits points.

Fix $\varepsilon>0$. Define the sets
$$
A_n=\left\{\omega\in\Omega: \frac{|S_n-\sum_{i=1}^n\psi (X_i)|}{\sqrt{n}}\ge\varepsilon/2\right\}\cup\left\{\omega\in\Omega: \frac{|Z_n-\psi (X_n)|}{ \sqrt{n}}\ge\varepsilon/2\right\}\quad\text{for $n\ge 1$}.
$$
Now we are going to show that $\sum_{n=1}^{\infty}\mathbb P_{\mu}(A_n)<\infty$. Indeed, keeping in mind that $\chi$ is Lipschitzean, by the Chebyshev inequality we obtain
$$
\begin{aligned}
&\mathbb P_{\mu}\left(\left\{\omega\in\Omega: \frac{|S_n-\sum_{i=1}^n\psi (X_i)|}{ \sqrt{n}}\ge\varepsilon/2\right\}\right)=\mathbb P_{\mu}\left(\left\{\omega\in\Omega: \frac{|\chi(X_n)-\chi(X_0)|}{ \sqrt{n}}\ge\varepsilon/2\right\}\right)\\
&\le c_0\frac{\mathbb E (\rho_{x_0}(X_n))^{2+\delta}+\mathbb E (\rho_{x_0}(X_0))^{2+\delta}}{n^{1+\delta/2}}\le \frac{c}{n^{1+\delta/2}},
\end{aligned}
$$
by (H3) for some constant $c>0$ independent of $n$.

Analogously, we may check that there exists a positive constant $C$ (independent of $n$) such that
$$
\begin{aligned}
&\mathbb P_{\mu}\left(\left\{\omega\in\Omega: \frac{|Z_n-\psi (X_n)|}{\sqrt{n}}\ge\varepsilon/2\right\}\right)=\mathbb P_{\mu}\left(\left\{\omega\in\Omega: \frac{|\chi(X_n)-\chi(X_{n-1})|}{\sqrt{n}}\ge\varepsilon/2\right\}\right)\\
&\le \frac{C}{n^{1+\delta/2}},
\end{aligned}
$$
by (H3) and the Lipschitz property of the function $\chi$. Thus the series $\sum_{n=1}^{\infty}\mathbb P_{\mu}(A_n)$ is convergent.

Finally, from the Borel--Cantelli lemma it follows that $\mathbb P_{\mu}$-a.s.
$$
\limsup_{n\to\infty}\sup_{0\le t\le 1}\left|\frac{S_k+(nt-k)Z_{k+1}}{\sigma\sqrt{2n\log\log n}}-\frac{\sum_{i=1}^k\psi(X_i)+(nt-k)\psi(X_{k+1})}{\sigma\sqrt{2n\log\log n}}\right|<\varepsilon,
$$
where $k\le nt\le k+1$. Since $\varepsilon>0$ was arbitrary, the proof is complete.
 \end{proof}

\end{document}